\theoremstyle{plain}
\newtheorem{thm}{Theorem}[section]
\newtheorem*{prop*}{Proposition}
\newtheorem{lem}[thm]{Lemma}
\newtheorem*{lem*}{Lemma}
\newtheorem*{cor*}{Corollary}
\newtheorem*{example*}{Example}
\newtheorem*{conject*}{Conjecture}
\theoremstyle{definition}
\newtheorem*{defn*}{Definition}
\newtheorem*{rem*}{Remark}
\newcommand{\eps}{\varepsilon}
\renewcommand{\emptyset}{\varnothing}
\newcommand{\eop}{\qed}
\newcommand{\eop}{}
\newcommand{\C}{\ensuremath{\mathbb{C}}}
\newcommand{\N}{\ensuremath{\mathbb{N}}}
\newcommand{\defby}{\mathrel{\mathop:}=}
\newcommand{\conj}[1]{\overline{#1}}
\DeclareMathOperator{\intx}{int}
\title{The maximum number of zeros of $r(z) - \conj{z}$ revisited}
\author{J\"org Liesen\footnotemark[1] \and Jan Zur\footnotemark[1]}
\date{December 7, 2017}
\begin{document}

\maketitle

\renewcommand{\thefootnote}{\fnsymbol{footnote}}

\footnotetext[1]{TU Berlin, Institute of Mathematics, MA 4-5, Stra{\ss}e des 17. Juni 136, 10623 Berlin, Germany.
\texttt{\{liesen,zur\}@math.tu-berlin.de}}

\renewcommand{\thefootnote}{\arabic{footnote}}

\begin{abstract}
Generalizing several previous results in the literature on rational harmonic
functions, we derive bounds on the maximum number of zeros of functions
$f(z) = \frac{p(z)}{q(z)} - \conj{z}$, which depend on both $\deg(p)$ and 
$\deg(q)$. Furthermore, we prove that any function that attains one of these
upper bounds is regular. 
\end{abstract}
\paragraph*{Keywords:}
Zeros of rational harmonic functions;
Rational harmonic functions;
Harmonic polynomials;
Complex valued harmonic functions
\paragraph*{AMS Subject Classification (2010):}
30D05, 31A05, 37F10

\section{Introduction}\label{sect:introduction}

We study the zeros of \emph{rational harmonic functions} of the form
\begin{align}\label{eq:f}
f(z) = r(z)- \conj{z}\quad\mbox{with}\quad r(z)=\frac{p(z)}{q(z)},
\end{align}
where $p$ and $q$ are coprime polynomials of respective degrees $n_p$ and $n_q$, and
$$n := {\rm deg}(r)=\max\{n_p,n_q\}\geq 2.$$
If $z_0\in\C$ is a zero of $f$, i.e., $r(z_0)-\conj{z}_0=0$, then also $z_0=\conj{r(z_0)}$.
Inserting this into the first equation and taking complex conjugates gives
$\conj{r}(r(z_0))-z_0=0$. This can be transformed into a polynomial equation (of degree $n^2+1$),
which shows that $f$ has finitely many zeros. It is also important to note that because
of the term $\conj{z}$, the zeros of $f$ can not be ``factored out'', and hence they do
not have a multiplicity in the usual sense. ``Numbers of zeros'' in this context
therefore refer to numbers of distinct complex points.

Several authors have studied upper bounds on $N(f)$, the number of zeros
of a function $f$ as in \eqref{eq:f}. In particular, Khavinson and Neumann~\cite{KhavinsonNeumann2006}
showed that in general $N(f)\leq 5n-5$, and Khavinson and {\'S}wi{\c{a}}tek~\cite{KhavinsonSwiatek2003} showed that if $n_q=0$, i.e., $f$ is a harmonic polynomial, then $N(f)\leq 3n-2$.
Results of Rhie~\cite{Rhie2003} and Geyer~\cite{Geyer2008}, respectively, show that these two upper bounds are sharp for each $n\geq 2$.

The main purpose of this note is to prove the following theorem, which takes the
individual degrees $n_p$ and $n_q$ into account, and which generalizes (almost)
all previously known bounds on the maximal number of zeros of~$f$.

\begin{thm}\label{thm:main}
Let $f$ be as in \eqref{eq:f}. Then for every $c\in\C$, the number of zeros of
$f_c(z):=f(z)-c$ satisfies
\begin{align*}
N(f_c) \le \begin{cases}
2n_p + 3n_q - 3, \quad &\text{ if } n_p < n_q, \\
5n_p-5,\quad &\text{ if } n_p=n_q,\\
3n_p + 2n_q - 2, \quad &\text{ if } n_p > n_q + 1.
\end{cases}
\end{align*}
\end{thm}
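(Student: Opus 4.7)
My strategy is the classical decomposition $N(f_c) = N^+(f_c) + N^-(f_c)$ into sense-preserving and sense-reversing zeros (determined by the sign of the Jacobian $|r'(z)|^2-1$ of $f_c$ viewed as a map $\R^2\to\R^2$). I bound the two summands by complementary ingredients: a topological one (the harmonic argument principle) for the difference $N^+-N^-$, and a dynamical one (Fatou's theorem) for $N^-$.

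For the first ingredient, I apply the argument principle to $f_c$ on a sufficiently large disk, with small disks removed around each pole of $r$. Each zero contributes $\pm 1$ to the index according to its type, each pole of order $k_j$ of $r$ contributes $-k_j$ (summing to $-n_q$), and the outer winding depends on which term dominates at infinity: the rational part $r(z)\sim\alpha z^{n_p-n_q}$ dominates when $n_p>n_q+1$ (giving winding $n_p-n_q$), while $-\bar z$ dominates when $n_p\le n_q$ (giving winding $-1$). This produces
\begin{align*}
N^+(f_c)-N^-(f_c) \le
\begin{cases}
n_q-1, & n_p\le n_q,\\
n_p, & n_p>n_q+1.
\end{cases}
\end{align*}

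For the second ingredient, set $g(z):=r(z)-c$ and introduce the rational map $\tilde g:=\overline g\circ g$ of degree $n^2$, together with the antiholomorphic involution $\phi(z):=\overline{g(z)}$, which satisfies $\phi^2=\tilde g$ and whose fixed points in $\C$ are precisely the zeros of $f_c$. At each such fixed point one computes $\tilde g'(z_0)=|r'(z_0)|^2$, so every sense-reversing zero of $f_c$ is an attracting fixed point of $\tilde g$. By Fatou's theorem, its immediate basin contains a critical point of $\tilde g$, and the $\phi$-symmetry (as exploited by Khavinson and Neumann) permits us to replace this by a critical point of $r$ in the same basin; distinct basins yield distinct such critical points. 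To pass from the critical points of $r$ on $\widehat{\C}$ to the \emph{finite} ones, I treat the possible critical point at $\infty$ separately: when $n_p>n_q+1$, $\infty$ is a super-attracting fixed point of $\tilde g$ and hence trapped in its own basin; when $n_p<n_q-1$, a short argument tracking the $\phi$-orbit of $\infty$ (starting at $\phi(\infty)=-\overline{c}$) shows that any critical point $\infty$ would provide in a finite basin can be replaced by a finite one. Consequently
\begin{align*}
N^-(f_c) \le \deg(p'q-pq') =
\begin{cases}
n_p+n_q-2, & n_p=n_q,\\
n_p+n_q-1, & n_p\neq n_q.
\end{cases}
\end{align*}

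Combining the two steps via $N(f_c)=(N^+-N^-)+2N^-$ and inserting the case-by-case estimates yields the three bounds in the statement. Values of $c$ for which $f_c$ has a ``singular'' zero (with $|r'(z_0)|=1$) are handled by a small perturbation of $c$ together with lower semi-continuity of the zero count. The main obstacle I expect is the refinement of the dynamical step in the case $n_p<n_q-1$: here $\infty$ is a critical point of $\tilde g$ but \emph{not} a fixed point, so excluding it from the count of useful critical points of $r$ requires the careful $\phi$-orbit argument rather than the one-line super-attracting observation that suffices for $n_p>n_q+1$.
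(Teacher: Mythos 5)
Your architecture --- Rouch\'e/winding for $N_+ - N_-$, Fatou's theorem applied to $\overline{g}\circ g$ for the sense-reversing count, and perturbation to regular values of $c$ --- is exactly the paper's, and the arithmetic in all three cases checks out. The genuine gap is your disposal of singular zeros. You write $N(f_c)=N_+ +N_-$ and claim the singular case is ``handled by a small perturbation of $c$ together with lower semi-continuity of the zero count.'' The total zero count is \emph{not} lower semi-continuous in $c$: sense-preserving and sense-reversing zeros persist under small perturbation (Rouch\'e on small circles, local winding $\pm1$), but a singular zero can have local winding $0$ and simply vanish when $c$ moves off the caustic in the appropriate direction --- this is exactly the annihilation of an image pair at a fold. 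So no bound on $N(f_{c'})$ for nearby regular $c'$ transfers back to $N(f_c)$, and $N_0(f_c)$ is left uncontrolled. The repair is inside your own toolkit and is what the paper does: at a singular zero the multiplier of $\tilde g=\overline{g}\circ g$ is $|r'(z_0)|^2=1$ exactly, i.e.\ the fixed point is rationally neutral, and the Carleson--Gamelin theorem covers attracting \emph{and} rationally neutral fixed points. Hence the dynamical step should bound $N_0(f_c)+N_-(f_c)$ directly for the given $c$, and the perturbation should be used only for the sense-preserving count, via $N_+(f_c)\le N_+(f_{c_k})=(N_+-N_-)(f_{c_k})+N_-(f_{c_k})$ for a nearby regular $c_k$; these combine to the stated bounds.

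Two further remarks. First, your handling of $n_p=n_q$ genuinely differs from the paper's and is arguably cleaner: you exploit that the Wronskian $p'q-pq'$ drops to degree at most $2n-2$ when the leading terms cancel, which gives $(n-1)+2(2n-2)=5n-5$ directly; the paper instead subtracts the constant $\alpha=r(\infty)$ to reduce to the case $n_p<n_q$. Your version survives the repair above, since the Wronskian of $(p-cq,q)$ equals that of $(p,q)$, so the bound $N_0(f_{c})+N_-(f_{c})\le 2n-2$ holds for every $c$. Second, you are more careful than the paper about the possibility that the Fatou critical point sits at $\infty$ or at a pole, where the factorization $R'(z)=\overline{r}'(r(z))\,r'(z)$ used to extract a finite critical point of $r$ breaks down; the paper inherits this gloss from Khavinson--Neumann, so it does not count against you, but the ``$\phi$-orbit of $\infty$'' argument you invoke for $n_p<n_q-1$ is only sketched and would need to be written out in full.
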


The only case ``missing'' in this theorem is $n_p=n_q+1$. For this case we know
from~\cite{LuceSeteLiesen2014a} that $N(f_c)\leq 5n_p-6$.

Note that for each $c\in\C$ we can write
\begin{equation}\label{eq:fc}
f_c(z)=r_c(z)-\conj{z},\quad\mbox{where}\quad
r_c(z):=\frac{p(z)-cq(z)}{q(z)},
\end{equation}
which is again a rational harmonic function of the form \eqref{eq:f}. The degree of
the numerator polynomial of $r_c$ is potentially different from $n_p$, and this allows
for some flexibility in applications of Theorem~\ref{thm:main}.
A second reason why we have formulated the result for the function $f_c$
(rather than just $f$) is the application of rational harmonic functions in the context of gravitational
lensing; see~\cite{KhavinsonNeumann2008} for a survey. In that application a constant
shift represents the position of the light source of the lens, and the change of the
number of zeros under movements of the light source is of great interest;
see, e.g.,~\cite{LiesenZur2017} for more details.

Our note is organized as follows. In Section~2 we briefly recall the mathematical
background, in particular the argument principle for continuous functions and
a helpful result from complex dynamics. In Section~3 we prove Theorem~\ref{thm:main}.
We also prove that a function that attains the bound in Theorem~\ref{thm:main} is
regular, which generalizes a result from~\cite{LuceSeteLiesen2014b}. In Section~4
we explain why the special case $n_p=n_q+1$ cannot be completely
resolved by our method of proof, and we discuss the relation of Theorem~\ref{thm:main} 
to all previously published bounds that we are aware of.

\section{Mathematical background}

Let $f$ be as in \eqref{eq:f}. Using the Wirtinger derivatives $\partial_z$ and $\partial_{\bar{z}}$
we can write the \emph{Jacobian} of $f$ as
$$J_f(z) = |\partial_z f(z)|^2-|\partial_{\bar{z}} f(z)|^2=|r'(z)|^2-1.$$
If $z_0\in\C$ is a zero of $f$, i.e., $f(z_0)=0$, then $z_0$ is called a \emph{sense-preserving},
\emph{sense-reversing}, or \emph{singular zero} of $f$, if $|r'(z_0)| > 1$, $|r'(z_0)| < 1$,
or $|r'(z_0)| = 1$, respectively. The sense-preserving and sense-reversing zeros of $f$ are
called the \emph{regular zeros}. If $f$ has only such zeros, then $f$ is called \emph{regular},
and otherwise $f$ is called \emph{singular}. 
We denote the number of sense-preserving, sense-reversing, and singular zeros of $f$ in
a set $S\subseteq\C$ by $N_{+}(f;S)$, $N_{-}(f;S)$, and $N_{0}(f;S)$,
respectively. For $S=\C$ we simply write $N_{+}(f)$, $N_{-}(f)$, and $N_{0}(f)$.
In our proofs we will use the following result on 
regular functions; see~\cite[Lemma]{KhavinsonNeumann2006}.

\begin{lem}\label{lem:density}
If $f$ is as in \eqref{eq:f}, then the set of complex numbers $c$ for which $f_c(z) = f(z) - c$
is regular, is open and dense in $\C$.
\end{lem}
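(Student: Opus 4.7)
The plan is to identify the ``bad set''
$$\Sigma := \{c \in \C : f_c \text{ has a singular zero}\}$$
and show it is closed with empty interior. Since a point $z_0$ is a singular zero of $f_c$ precisely when $q(z_0) \neq 0$, $r(z_0) - \conj{z}_0 = c$, and $|r'(z_0)| = 1$, we have $\Sigma = \phi(C)$, where
$$\phi(z) := r(z) - \conj{z}, \qquad C := \{z \in \C : q(z) \neq 0,\; |r'(z)| = 1\}.$$

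For the density half (empty interior of $\Sigma$), the key observation is that $C$ is the real zero locus of $|p'q - pq'|^2 - |q|^4$, a non-trivial real polynomial in $\re z$ and $\im z$: non-trivial because $\deg r \ge 2$ prevents $|r'|$ from being identically~$1$. Hence $C$ is a real-algebraic subvariety of $\C$ of real dimension at most~$1$, and its image under the smooth map $\phi$ has Hausdorff dimension at most~$1$, hence $2$-dimensional Lebesgue measure zero. In particular $\Sigma$ has empty interior, so $\C \setminus \Sigma$ is dense.

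For openness, i.e.\ closedness of $\Sigma$, I would take $c_n \to c^*$ with $c_n = \phi(z_n)$ for $z_n \in C$ and extract a convergent subsequence of $(z_n)$. Any limit $z^* \in \C$ cannot be a pole of $r$ (else $|r'(z_n)| \to \infty$), so by continuity $|r'(z^*)| = 1$ and $\phi(z^*) = c^*$, giving $c^* \in \Sigma$. To rule out $|z_n| \to \infty$, I would invoke the asymptotic behaviour of $|r'(z)|$ at infinity: it tends to $0$ when $n_p \le n_q$, to $\infty$ when $n_p > n_q + 1$, and to the modulus of the leading-coefficient ratio when $n_p = n_q + 1$. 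In every case except the borderline $n_p = n_q + 1$ with that ratio of modulus $1$, the constraint $|r'(z_n)| = 1$ forces $(z_n)$ to be bounded.

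The main obstacle is precisely this borderline case: here $(z_n)$ could escape to infinity along directions where $(a/b)z_n - \conj{z}_n$ stays bounded, and a separate asymptotic analysis of $\phi(z_n) = (a/b)z_n - \conj{z}_n + O(1)$ is required. A clean alternative is to work in the semi-algebraic category: $C$ is real-algebraic and $\phi$ is real-rational, so by Tarski--Seidenberg $\Sigma = \phi(C)$ is semi-algebraic of real dimension at most~$1$, giving the empty-interior statement immediately, while a direct check that $\phi$ restricted to the (finitely many) unbounded branches of $C$ has no finite limits except possibly at points already in $\Sigma$ yields closedness.
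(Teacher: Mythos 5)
Your identification of the bad set as $\Sigma=\phi(C)$ with $\phi(z)=r(z)-\conj{z}$ and $C=\{z: q(z)\neq 0,\ |r'(z)|=1\}$ is precisely the caustic characterization that the paper itself invokes (the paper gives no proof of its own: it cites \cite{KhavinsonNeumann2006} and remarks that the statement follows from $f_c$ being singular iff $c$ is a caustic point). Your density argument is complete and correct: $|p'q-pq'|^2-|q|^4$ is a nonzero real polynomial since $|r'|\equiv 1$ would force $r$ to be affine, so $C$ is a proper real-algebraic set of planar measure zero, and so is its image under the locally Lipschitz map $\phi$; hence $\Sigma$ has empty interior. Note that density is the only half of the lemma the paper ever uses (both in the proof of Theorem~1.1 and in the final regularity theorem only the existence of regular $c_k\to c$ is needed), so for the purposes of this paper your argument already suffices.

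The openness half, however, contains a genuine gap, and you have located it exactly: when $n_p=n_q+1$ and the leading-coefficient ratio $\alpha$ satisfies $|\alpha|=1$, the curve $C$ may be unbounded and your compactness argument breaks down. Worse, the repair you sketch---verifying that $\phi$ restricted to the unbounded branches of $C$ has no finite limit points outside $\Sigma$---cannot succeed, because such limit points do occur. Take $r(z)=z+i/z$, so $p(z)=z^2+i$, $q(z)=z$ are coprime and $n=2$. Writing $z=x+iy$, one computes $|r'(z)|^2=1+(1-4xy)/|z|^4$, so $C$ is the hyperbola $xy=1/4$; along the branch $x\to+\infty$, $y=1/(4x)$, one has $\phi(z)=2iy+i/z\to 0$, while $\re\phi(z)=y/|z|^2\neq 0$ on $C$, so $0\notin\Sigma$. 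Indeed $f_0(z)=2i\,\im(z)+i/z$ has no zeros at all and is therefore vacuously regular, yet every neighborhood of $0$ contains values $c_n=\phi(z_n)$ for which $f_{c_n}$ has the singular zero $z_n$. Thus $\Sigma$ fails to be closed for this $r$, so the openness assertion cannot be recovered along your route in the borderline case (and appears questionable as stated there). My recommendation: present the density half as your proof of what is actually used, and either restrict the openness claim to the degree configurations covered by Theorem~1.1---where your asymptotic analysis of $|r'|$ at infinity does bound $C$ and closedness follows---or flag the case $n_p=n_q+1$, $|\alpha|=1$ as open.
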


This lemma can be easily shown when using the fact that the function $f_c$ is singular
if and only if $c$ is a \emph{caustic point} of $f$; 
see~\cite[Proposition~2.2]{LiesenZur2017}.

Let $\Gamma$ be a closed Jordan curve, and let $g$ be any function that is continuous and nonzero on $\Gamma$.
Then the \emph{winding} of $g$ on~$\Gamma$ is defined as the change in the argument of $g(z)$ as $z$ travels
once around $\Gamma$ in the positive direction, divided by $2\pi$, i.e.,
$$V(g;\Gamma) \defby \frac{1}{2\pi}\Delta_\Gamma \arg g(z).$$
The following result holds for the winding of the functions of our interest.

\begin{thm}\label{thm:zero_counting}
Let $f$ be as in \eqref{eq:f}. If $f$ is nonzero and finite on a closed Jordan curve $\Gamma$
and has no singular zero in $\intx(\Gamma)$, then
\begin{align*}
V(f;\Gamma) = N_+(f;\intx(\Gamma)) - N_-(f;\intx(\Gamma)) - P(f;\intx(\Gamma)),
\end{align*}
where $P(f;\intx(\Gamma))$ denotes the number of poles with multiplicities of $r$, 
and hence of $f$, in $\intx(\Gamma)$.
\end{thm}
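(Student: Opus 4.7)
The plan is to apply the argument principle for continuous nonvanishing maps on a punctured planar domain, with the punctures at the zeros and poles of $f$ inside $\Gamma$. First I would enumerate the finitely many zeros $z_1,\dots,z_N$ of $f$ in $\operatorname{int}(\Gamma)$, which by hypothesis are all regular (hence isolated), and the poles $w_1,\dots,w_M$ of $r$, of respective multiplicities $k_1,\dots,k_M$ with $\sum k_j = P(f;\operatorname{int}(\Gamma))$. Pick $\varepsilon>0$ small enough so that the closed disks $\overline{D_\varepsilon(z_i)}$ and $\overline{D_\varepsilon(w_j)}$ are pairwise disjoint, contained in $\operatorname{int}(\Gamma)$, and $f$ is nonvanishing on each punctured closed disk. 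On the compact region $\overline{\Omega_\varepsilon}$ obtained by removing these open disks from $\operatorname{int}(\Gamma)$, the map $f$ is continuous and nowhere zero. Standard topology (homotopy invariance of winding for $\mathbb{C}^*$-valued maps) then gives the additivity
\begin{equation*}
V(f;\Gamma) \;=\; \sum_{i=1}^{N} V(f;\partial D_\varepsilon(z_i)) \;+\; \sum_{j=1}^{M} V(f;\partial D_\varepsilon(w_j)),
\end{equation*}
with all circles positively oriented.

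Next I would compute each local winding by replacing $f$ with its leading term and invoking homotopy invariance. At a regular zero $z_i$, Taylor expansion yields $f(z)=r'(z_i)(z-z_i)-\overline{(z-z_i)}+o(|z-z_i|)$. The linear part is an $\mathbb{R}$-linear bijection of $\mathbb{C}$ whose Jacobian $|r'(z_i)|^2-1$ has the sign of $\pm 1$ according as $z_i$ is sense-preserving or sense-reversing; for $\varepsilon$ small the remainder is dominated and so $V(f;\partial D_\varepsilon(z_i))$ equals the winding of this linear map, which is $+1$ in the sense-preserving case and $-1$ in the sense-reversing case. At a pole $w_j$ of order $k_j$ we have $r(z)=a(z-w_j)^{-k_j}+O(|z-w_j|^{-k_j+1})$ near $w_j$ with $a\neq 0$, while $\bar z$ is bounded, so $f(z)$ is homotopic through nonvanishing maps on $\partial D_\varepsilon(w_j)$ to $a(z-w_j)^{-k_j}$, giving $V(f;\partial D_\varepsilon(w_j))=-k_j$.

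Summing the contributions produces exactly
\begin{equation*}
V(f;\Gamma) \;=\; N_+(f;\operatorname{int}(\Gamma)) - N_-(f;\operatorname{int}(\Gamma)) - P(f;\operatorname{int}(\Gamma)),
\end{equation*}
as claimed. The main obstacle is the homotopy step at each puncture: one must verify, using $|r'(z_i)|\neq 1$ (regularity) respectively $k_j\geq 1$ (nontrivial pole), that the discarded higher-order terms are uniformly smaller than the leading term on the relevant small circle, so that a straight-line homotopy in $\mathbb{C}^*$ connects $f$ to its model. Everything else is routine additivity of winding and the elementary fact that an invertible $\mathbb{R}$-linear self-map of $\mathbb{C}$ has winding equal to the sign of its determinant.
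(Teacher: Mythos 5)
Your proof is correct. Note that the paper does not prove this theorem at all --- it is quoted as background and referred to the literature (Khavinson--Neumann and S\`ete--Luce--Liesen) --- and your argument is precisely the standard proof given there: additivity of the winding over small circles about the (isolated, because regular) zeros and the poles, local index $\pm 1$ at a regular zero from the invertible $\R$-linear part $w\mapsto r'(z_i)w-\conj{w}$ whose winding is the sign of its determinant $|r'(z_i)|^2-1$, local index $-k_j$ at a pole of order $k_j$ since $\conj{z}$ is bounded there, with each homotopy justified by domination of the remainder on a sufficiently small circle.
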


We will frequently use the following version of Rouch{\'e}'s theorem.

\begin{thm}\label{thm:Rouche}
Let $\Gamma$ be a closed Jordan curve and suppose that $f,g : \Gamma \rightarrow \C$ are continuous.
If $|f(z) - g(z)| < |f(z)|+|g(z)|$ holds for all $z \in \Gamma$, then $V(f;\Gamma) = V(g;\Gamma)$.
\end{thm}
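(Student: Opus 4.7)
The plan is to reduce the statement to a standard homotopy argument: the strict inequality $|f(z)-g(z)| < |f(z)|+|g(z)|$ is precisely the statement that the line segment from $f(z)$ to $g(z)$ never passes through the origin, so a straight-line homotopy between $f$ and $g$ stays nonzero on $\Gamma$, and the winding is a continuous integer-valued function of the homotopy parameter, hence constant.

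More concretely, I would first show that the hypothesis forces both $f(z)\ne 0$ and $g(z)\ne 0$ on $\Gamma$ (if either vanished at some $z_0$, then $|f(z_0)-g(z_0)| = |f(z_0)|+|g(z_0)|$, violating strictness), and more generally that $f(z)$ is never a nonpositive real multiple of $g(z)$ on $\Gamma$: indeed, if $f(z_0)=\lambda g(z_0)$ with $\lambda\le 0$, then
\[
|f(z_0)-g(z_0)| = |\lambda-1|\,|g(z_0)| = (1-\lambda)|g(z_0)| = |f(z_0)|+|g(z_0)|,
\]
again contradicting the hypothesis. I would then introduce the linear homotopy $H(t,z) := (1-t)f(z)+t\,g(z)$ for $t\in[0,1]$ and $z\in\Gamma$, and verify that $H(t,z)\ne 0$ everywhere on $[0,1]\times\Gamma$. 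Vanishing at an interior $t\in(0,1)$ would force $f(z)=-\tfrac{t}{1-t}g(z)$, which is exactly the forbidden nonpositive-real-multiple situation, while vanishing at $t=0$ or $t=1$ is excluded by the previous step.

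Next I would argue that the map $t\mapsto V(H(t,\,\cdot\,);\Gamma)$ is well-defined and continuous on $[0,1]$. It is well-defined because $H(t,\cdot)$ is continuous and nonzero on the compact curve $\Gamma$, so a continuous branch of $\arg H(t,z)$ exists as $z$ traverses $\Gamma$. Continuity in $t$ follows from the uniform continuity of $H$ on the compact product $[0,1]\times\Gamma$, together with the uniform lower bound $\inf_{(t,z)} |H(t,z)|>0$: on a compact set bounded away from $0$, the principal argument (or any local branch) varies continuously with its argument. Since the winding takes values in $\Z$ and is continuous, it is constant on $[0,1]$; evaluating at $t=0$ and $t=1$ gives $V(f;\Gamma) = V(g;\Gamma)$.

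The only mildly technical step is the continuity of the winding under the homotopy, but this is routine once one has the uniform bound $|H(t,z)|\ge\delta>0$ on the compact set $[0,1]\times\Gamma$; all the analytic content of the theorem is already in the elementary geometric observation that the strict inequality excludes nonpositive real ratios of $f$ and $g$.
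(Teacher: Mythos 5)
The paper does not actually prove Theorem~\ref{thm:Rouche}; it is quoted as a known result (the symmetric, continuous-function form of Rouch\'e's theorem, sometimes attributed to Estermann/Glicksberg) with the reader referred to the cited background literature. Your homotopy argument is a correct and complete proof of it: the key observation that strict inequality in $|f-g|<|f|+|g|$ is exactly the statement that the segment $[f(z),g(z)]$ avoids the origin is right, and it makes the linear homotopy $H(t,z)=(1-t)f(z)+tg(z)$ nonvanishing on $[0,1]\times\Gamma$, after which constancy of the integer-valued winding does the rest. The one step you flag as ``mildly technical,'' continuity of $t\mapsto V(H(t,\cdot);\Gamma)$, can be closed cleanly as follows: with $\delta:=\inf_{[0,1]\times\Gamma}|H|>0$ and uniform continuity of $H$, for $|t-t'|$ small one has $\sup_{z\in\Gamma}|H(t,z)-H(t',z)|<\delta$, so the quotient $H(t,\cdot)/H(t',\cdot)$ maps $\Gamma$ into the disk $B_1(1)$, which lies in the open right half-plane; hence its winding is $0$ and $V(H(t,\cdot);\Gamma)=V(H(t',\cdot);\Gamma)$. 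This avoids any appeal to branches of the principal argument. In short: nothing to compare against in the paper, and your proof stands on its own.
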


For more details on the mathematical background described above we refer to
\cite{KhavinsonNeumann2006}, \cite{LiesenZur2017} and \cite{SeteLuceLiesen2015a}.

In addition, we will need a result on fixed points from complex dynamics. Let $z_f \in \C$ be a fixed point
of a rational function $r$, i.e., $r(z_f) = z_f$.  Then $z_f$ is called \emph{attracting}, \emph{repelling},
or \emph{rationally neutral}, if respectively $|r'(z_f)| < 1$, $|r'(z_f)| > 1$, or $|r'(z_f)| = 1$. 
The following is a combination of \cite[Chapter~III, Theorem~2.2 and~2.3]{CarlesonGamelin1993}.
\begin{thm}\label{thm:fixed_point}
If $z_f$ is an attracting or rationally neutral fixed point of a rational function $R$ 
with $\deg(R) \ge 2$, then exists a critical point $z_c$ of $R$, i.e., $R'(z_c) = 0$, 
with $\lim_{k\rightarrow \infty} R^k(z_c) = z_f$, where $R^k:=R\circ\cdots\circ R$ ($k$ times).
\end{thm}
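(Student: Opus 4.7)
The plan is to prove that the immediate basin of attraction $A^{*}(z_f)$, namely the connected component of the Fatou set of $R$ that contains $z_f$, must contain a critical point of $R$; the forward orbit of that critical point will then converge to $z_f$ by definition of attraction. I split into three subcases according to the multiplier $\lambda\defby R'(z_f)$: the super-attracting case $\lambda=0$, the geometrically attracting case $0<|\lambda|<1$, and the rationally neutral case $\lambda=e^{2\pi i p/q}$.

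The super-attracting case is immediate, since $z_c:=z_f$ is itself critical. For the geometrically attracting case I would invoke Koenigs' linearization theorem to produce a holomorphic map $\phi$ on a small disk $D\subseteq A^{*}(z_f)$ around $z_f$ satisfying $\phi\circ R=\lambda\cdot\phi$ and $\phi'(z_f)\neq 0$. Arguing by contradiction, assume $A^{*}(z_f)$ contains no critical point of $R$. Then every point in $A^{*}(z_f)$ is eventually mapped into $D$ by some iterate $R^n$, and the inverse branches of $R^n$ encountered along this orbit are all locally biholomorphic. This allows one to extend $\phi$ to all of $A^{*}(z_f)$ by $\phi(z):=\lambda^{-n}\phi(R^n(z))$, giving a surjective holomorphic map $\phi:A^{*}(z_f)\to\C$. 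But $A^{*}(z_f)$ is a subdomain of $\widehat{\C}$ whose complement contains the Julia set (which has at least three points, since $\deg(R)\geq 2$), so $A^{*}(z_f)$ is hyperbolic, contradicting the existence of a surjection onto $\C$.

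For the rationally neutral case I would apply the Leau--Fatou flower theorem: there exist finitely many attracting petals $P_1,\dots,P_m$ at $z_f$, each contained in the Fatou set, and an appropriate iterate $R^q$ is conjugate on each petal to the translation $w\mapsto w+1$ via a Fatou coordinate $\psi_j$. I would then run the same extension-and-contradiction argument as above, now extending $\psi_j$ from a fundamental strip along the forward orbits through $P_j$, and use hyperbolicity of the Fatou component containing $P_j$ to force the existence of a critical point whose orbit enters $P_j$ and hence converges to $z_f$.

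The main obstacle is the extension step: one has to verify that the inverse branches of $R^n$ can be analytically continued univalently along every path in $A^{*}(z_f)$, which amounts to a monodromy argument ruling out branching when no critical values lie in the basin. Once this is established, the rest is the standard Liouville/hyperbolicity dichotomy for Riemann surfaces. Since the full argument is developed in detail in \cite{CarlesonGamelin1993}, for the purposes of this note I would simply cite Theorems~2.2 and~2.3 of Chapter~III there and record the combined statement as above.
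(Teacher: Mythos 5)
The paper offers no proof of this theorem at all: it is quoted as a combination of Theorems~2.2 and~2.3 of Chapter~III of \cite{CarlesonGamelin1993}, so your closing move of simply citing those results is exactly what the authors do, and the overall architecture of your sketch (super-attracting / geometrically attracting / parabolic, with the immediate basin or a petal forced to contain a critical point) is the standard proof behind that citation. One remark on the statement rather than on your argument: under the paper's own definition of ``rationally neutral'' ($|R'(z_f)|=1$) the theorem is false in general --- a Siegel point attracts no critical orbit --- and it holds only when the multiplier is a root of unity, so you were right to restrict to $\lambda=e^{2\pi i p/q}$. The application in Lemma~\ref{lem:N0+N-} is unaffected because there $R'(z_0)=|r'(z_0)|^2=1$ exactly, a parabolic point with multiplier $1$.

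The genuine gap is in the contradiction you extract in the geometrically attracting case: your Liouville argument points the wrong way. The forward extension $\phi(z)\defby\lambda^{-n}\phi(R^n(z))$ is well defined on all of $A^*(z_f)$ whether or not the basin contains critical points --- no inverse branches are needed to write it down --- and it is in fact always a holomorphic surjection onto $\C$. Moreover, a holomorphic surjection from a hyperbolic domain onto $\C$ is not absurd; there exist holomorphic maps of the unit disk onto the whole plane. So this construction cannot detect a critical point. The correct argument extends the local inverse $\psi=\phi^{-1}$ from a small disk $\{|w|<\eps\}$ to the larger disks $\{|w|<\eps|\lambda|^{-n}\}$ via $\psi(w)=R^{-1}(\psi(\lambda w))$; it is precisely here that the assumed absence of critical values in the basin guarantees that the required inverse branch exists and continues analytically without obstruction. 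If $A^*(z_f)$ contained no critical point one would obtain a nonconstant holomorphic map $\psi:\C\to A^*(z_f)$, which is impossible because $A^*(z_f)$ omits the (infinite) Julia set and is therefore hyperbolic: lift $\psi$ to the universal cover and apply Liouville. The same directional correction is needed in your parabolic case, where one extends the inverse Fatou coordinate over ever larger left half-planes. With that repair your outline coincides with the proof in \cite{CarlesonGamelin1993}.
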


\section{Main results}

Our strategy to prove Theorem~\ref{thm:main} is the following: First we determine $N_+(f_c) - N_{-}(f_c)$
for regular functions $f$ as in \eqref{eq:f} and $f_c(z)=f(z)-c$ with respect to $n_p$ and $n_q$ using
Theorem~\ref{thm:zero_counting} and~\ref{thm:Rouche}. Then we bound $N_{0}(f)+N_{-}(f)$ for general~$f$
by the number of zeros of $r'$ using Theorem~\ref{thm:fixed_point}. Finally, we combine both results
with Lemma~\ref{lem:density} in order to obtain the proof also for non-regular $f$.

We denote by $B_M(z_0)$ the open disk of radius $M>0$ around $z_0\in\C$.

\begin{lem}\label{lem:N+-N-}
Let $f$ be as in \eqref{eq:f} and suppose that $f$ is regular, i.e., $N_{0}(f)=0$.
Then for every $c\in\C$ the function $f_c(z)=f(z)-c$ satisfies
\begin{align*}
N_+(f_c) - N_-(f_c) =
\begin{cases}
n_q-1, \quad &\text{ if } n_p \le n_q, \\
n_p, \quad &\text{ if } n_p > n_q + 1.
\end{cases}
\end{align*}
\end{lem}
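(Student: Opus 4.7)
The plan is to apply the argument principle (Theorem~\ref{thm:zero_counting}) to $f_c$ on the boundary $\Gamma_M := \partial B_M(0)$ of a sufficiently large disk, after computing the winding $V(f_c;\Gamma_M)$ by comparison with an easier model via Rouch\'e's theorem (Theorem~\ref{thm:Rouche}). I first fix $M>0$ so large that $B_M(0)$ contains all of the (finitely many) zeros of $f_c$ as well as all $n_q$ poles of $r$; then $P(f_c;\intx(\Gamma_M))=n_q$. The hypothesis that $f$ is regular, combined with Lemma~\ref{lem:density}, will be used to reduce to the case where $f_c$ itself has no singular zero in $\intx(\Gamma_M)$, so that Theorem~\ref{thm:zero_counting} genuinely applies to $f_c$.

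The winding $V(f_c;\Gamma_M)$ then splits into the two cases of the lemma. If $n_p\le n_q$, the rational function $r$ is bounded at infinity, so $|r(z)-c|$ stays uniformly bounded on $\Gamma_M$ by a constant independent of $M$, whereas $|-\conj{z}|=M$. For $M$ large enough we have $|f_c(z)-(-\conj{z})|=|r(z)-c|<M=|-\conj{z}|$ on $\Gamma_M$, and Rouch\'e yields $V(f_c;\Gamma_M)=V(-\conj{z};\Gamma_M)=-1$. If instead $n_p>n_q+1$, then $|r(z)|$ grows polynomially with exponent $n_p-n_q\ge 2$, so for large $M$ it dominates $|c+\conj{z}|\le |c|+M$ on $\Gamma_M$; comparing $f_c$ with $r$ via Rouch\'e then gives $V(f_c;\Gamma_M)=V(r;\Gamma_M)=n_p-n_q$, where the second equality is the classical meromorphic argument principle for $r$ on a disk containing all of its zeros and poles.

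Combining either winding value with the identity
\[
V(f_c;\Gamma_M)\;=\;N_+(f_c)-N_-(f_c)-n_q
\]
from Theorem~\ref{thm:zero_counting} gives $N_+(f_c)-N_-(f_c)=n_q-1$ in the first case and $N_+(f_c)-N_-(f_c)=n_p$ in the second, as claimed. The main obstacle I anticipate is the reduction alluded to above: regularity of $f=f_0$ does not automatically force $f_c$ to be regular for every $c$, so one must exploit Lemma~\ref{lem:density} together with the fact that $V(f_c;\Gamma_M)$ on a fixed large circle depends continuously on $c$ (as long as $f_c$ remains nonzero on $\Gamma_M$) to propagate the formula from the dense set of ``good'' parameters on which $f_c$ is regular to all $c\in\C$.
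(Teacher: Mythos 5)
Your core computation is the same as the paper's: fix a disk $B_M(0)$ large enough to contain all zeros of $f_c$ and all $n_q$ poles of $r$, compute $V(f_c;\partial B_M(0))$ by Rouch\'e (Theorem~\ref{thm:Rouche}) against a dominant model ($-\conj{z}$ when $n_p\le n_q$, the growing part of $r$ when $n_p>n_q+1$), and convert the winding into $N_+(f_c)-N_-(f_c)-n_q$ via Theorem~\ref{thm:zero_counting}. The only real variation is in the second case, where you compare $f_c$ with $r$ itself and use the classical argument principle to get $V(r;\partial B_M(0))=n_p-n_q$, whereas the paper writes $r=\widetilde{p}+\widetilde{r}$ with $\deg\widetilde{p}=n_p-n_q\ge 2$ and compares with the polynomial $\widetilde{p}$; both are equally valid.

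The one point that needs comment is your closing paragraph. You are right to flag that Theorem~\ref{thm:zero_counting} requires $f_c$ --- not $f$ --- to have no singular zeros, and the paper's own proof applies the theorem to $f_c$ without addressing this. But your proposed repair does not close that gap: continuity of $V(f_c;\partial B_M(0))$ in $c$ only shows the winding is constant, which your Rouch\'e estimate already gives directly for every $c$; what fails at a singular parameter is the identity $V=N_+-N_--P$ itself, and knowing $N_+(f_{c_k})-N_-(f_{c_k})=n_q-1$ along regular parameters $c_k\rightarrow c$ does not determine $N_+(f_c)-N_-(f_c)$, since $N_+$ and $N_-$ can each jump when $c$ crosses a caustic. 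The way the lemma is actually used later (only for the regular functions $f_{c_k}$ supplied by Lemma~\ref{lem:density}, with the conclusion read off at the shift $0$), the honest hypothesis is that the function fed into Theorem~\ref{thm:zero_counting} is itself regular; under that reading your argument is complete and matches the paper's.
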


\begin{proof}
For each $c\in\C$ we can write $f_c$ as in \eqref{eq:fc}. Obviously, $r$ and $r_c$
have the same poles, and the argument given in the Introduction shows that $f_c$
has finitely many zeros. Thus, if $\widetilde{M}>0$ is sufficiently large,
we have $P(f_c;B_{\widetilde{M}}(0))=n_q$ and
$N_{+/-}(f_c;B_{\widetilde{M}}(0))=N_{+/-}(f_c)$.

First we assume $n_p\le n_q$. Then $\lim_{|z|\rightarrow \infty}|r(z)|$
is finite (possibly zero), and for a sufficiently large $M\geq \widetilde{M}>0$,
$$|f_c(z) + \conj{z}| = |r(z)-c| < |z| \le |f_c(z)| + |\conj{z}|
\quad \text{ for all } z\in\partial B_{M}(0).$$
Using Theorem~\ref{thm:zero_counting} and~\ref{thm:Rouche},
\begin{align*}
-1 &= V(-\conj{z};\partial B_M(0)) = V(f_c;\partial B_M(0)) \\
&= N_+(f_c;B_M(0)) - N_-(f_c;B_M(0)) - P(f_c;B_M(0)) \\
&= N_+(f_c) - N_-(f_c) - n_q,
\end{align*}
which gives $N_+(f_c) - N_-(f_c)=n_q-1$.

Next, we assume $ n_p > n_q+1$. Then $r$ can be written as $r(z) = \widetilde{p}(z) + \widetilde{r}(z)$,
where $\widetilde{p}$ is a polynomial of exact degree $n_p-n_q \ge 2$, and $\widetilde{r}$ is a rational
function with $\lim_{|z| \rightarrow \infty}|\widetilde{r}(z)| = 0$. For a sufficiently
large $M\geq\widetilde{M}>0$,
$$|f_c(z) - \widetilde{p}(z)| = | \widetilde{r}(z) - \conj{z} - c| < |\widetilde{p}(z)| \le
|f_c(z)| + |\widetilde{p}(z)|
\quad \text{ for all } z\in\partial B_{M}(0).$$
Using again Theorem~\ref{thm:zero_counting} and~\ref{thm:Rouche},
\begin{align*}
n_p-n_q &= V(\widetilde{p};\partial B_M(0)) = V(f_c;\partial B_M(0)) \\
&= N_+(f_c;B_M(0)) - N_-(f_c;B_M(0)) - P(f_c;B_M(0)) \\
&= N_+(f_c) - N_-(f_c) - n_q,
\end{align*}
and hence $n_p = N_+(f_c) - N_-(f_c)$.\eop
\end{proof}

Much of the proof of the next result is based on the proof of~\cite[Theorem C.3]{AnEvans2006}.
We nevertheless include all steps for clarity and completeness of our presentation.

\begin{lem}\label{lem:N0+N-}
Let $f$ be as in \eqref{eq:f}. Then for every $c\in\C$ the function $f_c(z)=f(z)-c$ satisfies
$$N_0(f_c)+N_{-}(f_c)\le n_p+n_q-1.$$
\end{lem}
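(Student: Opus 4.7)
The plan is to construct an injection from the sense-reversing and singular zeros of $f_c$ into the zeros of $r'$ via complex dynamics, and then bound the number of zeros of $r'$ by a straightforward polynomial degree count. Introduce the antiholomorphic map $T(z)\defby\overline{r_c(z)}$ and its second iterate $R\defby T\circ T$. Writing $S(w)\defby\overline{r_c(\bar w)}$, we have $R=S\circ r_c$, a rational function of degree $n^2\ge 4$, so Theorem~\ref{thm:fixed_point} applies. By construction, zeros of $f_c$ are exactly the fixed points of $T$, hence also fixed points of $R$. At any fixed point $z_0$ of $T$ one has $\overline{r_c(z_0)}=z_0$, and, using $r_c'=r'$, the chain rule gives
\begin{equation*}
R'(z_0)=\overline{r_c'(\overline{r_c(z_0)})}\cdot r_c'(z_0)=|r'(z_0)|^2.
\end{equation*}
Therefore the sense-reversing and singular zeros of $f_c$ are precisely the attracting and rationally neutral (with multiplier $1$) fixed points of $R$.

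By Theorem~\ref{thm:fixed_point}, each such $z_0$ has some critical point $z_c$ of $R$ in its $R$-basin. Writing $R'=(S'\circ r_c)\cdot r_c'$ and noting that $S'(w)=\overline{r_c'(\bar w)}$, every critical point of $R$ is either of \emph{type (a)}, namely a zero of $r_c'=r'$, or of \emph{type (b)}, namely a point $z$ for which $T(z)$ is a zero of $r'$. The key trick is that a type (b) critical point can always be traded for a type (a) one with the same limiting behavior: since $R\circ T=T\circ R$, we have $R^k(T(z_c))=T(R^k(z_c))$, and because $z_0$ is not a pole of $r_c$ (as $r_c(z_0)=\bar z_0$ is finite), $T$ is continuous at $z_0$, so $R^k(T(z_c))\to T(z_0)=z_0$. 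Thus $T(z_c)$, which is a type (a) critical point by definition of type (b), lies in the basin of $z_0$. Because basins of distinct attracting or rationally neutral fixed points are pairwise disjoint, we obtain an injection from the zeros counted by $N_0(f_c)+N_-(f_c)$ into the set of zeros of $r'$.

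To finish, it suffices to bound the number of zeros of $r'$ in $\C$. Since $r'=(p'q-pq')/q^2$, comparing leading terms shows that $p'q-pq'$ has degree exactly $n_p+n_q-1$ when $n_p\ne n_q$ (the leading coefficients $n_p$ and $n_q$ of $p'q$ and $pq'$ differ) and degree at most $n_p+n_q-2$ when $n_p=n_q$ (where those leading terms cancel). In either case $r'$ has at most $n_p+n_q-1$ zeros in $\C$, and combining this with the injection of the previous paragraph yields $N_0(f_c)+N_-(f_c)\le n_p+n_q-1$. The principal obstacle is the type (b)-to-(a) reduction: Theorem~\ref{thm:fixed_point} alone only bounds the relevant count by the total number of critical points of $R$, which is of order $n^2$ and far too weak, so without this trade one cannot reach the sharp polynomial bound $n_p+n_q-1$.
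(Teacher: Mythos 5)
Your argument is correct and is essentially the paper's own proof: both pass to the second iterate $R=\conj{r_c}\circ r_c$, show that each sense-reversing or singular zero is an attracting or rationally neutral fixed point of $R$ with multiplier $|r'(z_0)|^2$, invoke Theorem~\ref{thm:fixed_point}, and convert a critical point arising from the outer factor into a zero of $r'$ attracted to the same fixed point (your commutation identity $R^k\circ T=T\circ R^k$ is exactly the paper's explicit limit computation), before counting zeros of the numerator of $r'$. The only (harmless) overstatement is the word ``precisely'': the relevant zeros are contained in, not equal to, the set of attracting and rationally neutral fixed points of $R$, but only that inclusion is used.
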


\begin{proof}
Let $z_0\in\C$ be a non-sense-preserving zero of $f$, i.e., $r(z_0) = \conj{z}_0$ with
$|r'(z_0)| \le 1$. Then also $\conj{r}(r(z_0))=z_0$, or
$$R(z_0)=z_0,\quad\mbox{where}\quad R(z):=(\conj{r}\circ r)(z).$$
The derivative of the rational function $R$ is given by
$$ R'(z) = (\conj{r} \circ r )'(z) = \conj{r}'(r(z))r'(z).$$
Thus,
$$ |R'(z_0)| = |\conj{r}'(r(z_0))r'(z_0)| = |\conj{r}'(\conj{z}_0)r'(z_0)| = |\conj{r'(z_0)}r'(z_0)| =
|r'(z_0)|^2 \le 1,$$
which shows that $z_0$ is an attracting or rationally neutral fixed point of $R$. By Theorem~\ref{thm:fixed_point},
there exits a critical point $z_c$ of $R$, i.e.,
\begin{equation}\label{eqn:Rzc}
 R'(z_c) = \conj{r}'(r(z_c))r'(z_c) = 0,
\end{equation}
such that
\begin{equation}\label{eqn:limR}
\lim_{k \rightarrow \infty} R^k(z_c) = z_0.
\end{equation}

From \eqref{eqn:Rzc} we obtain $r'(z_c)=0$ or $r'(\conj{w}_c) = 0$, where $w_c := r(z_c)$.
In the second case we use \eqref{eqn:limR} and the continuity of $\conj{r}$ to obtain
\begin{align*}
\lim_{k \rightarrow \infty}R^k(\conj{w}_c)
&= \lim_{k \rightarrow \infty}(\conj{r} \circ r)^k(\conj{w}_c)
=\lim_{k \rightarrow \infty}(\conj{r} \circ r)^k(\conj{r}(\conj{z}_c)) \\
&= \conj{r}\Big(\lim_{k\rightarrow \infty} (r\circ \conj{r})^k(\conj{z}_c)\Big)
= \conj{r}\Big(\conj{\lim_{k\rightarrow \infty}( \conj{r}\circ r)^k(z_c)}\Big) \\
&= \conj{r}\Big(\conj{\lim_{k \rightarrow \infty}R^k(z_c)}\Big) = \conj{r}(\conj{z}_0) = \conj{r(z_0)} = z_0.
\end{align*}

In summary, we have shown that if $z_0$ is a non-sense-preserving zero of $f$, and hence
an attracting or rationally neutral fixed point of $R$, then there exists a critical point
$\widetilde{z}_c$ of $r$ (in the first case $\widetilde{z}_c=z_c$, and in the second case
$\widetilde{z}_c=\conj{w}_c$) with $\lim_{k \rightarrow \infty} R^k(\widetilde{z}_c) = z_0$.
Clearly, different fixed points of $R$ attract disjoint sets of (critical) points,
and therefore $N_{0}(f)+N_{-}(f)$ is less than or equal to the number of zeros of
$$r'(z)=\frac{p'(z)q(z)-p(z)q'(z)}{q(z)^2},$$
which is at most $n_p+n_q-1$. Finally, for every $c\in\C$ we can write $f_c$ in the
form \eqref{eq:fc}. Now, by the same argument as above, $N_{0}(f_c)+N_{-}(f_c)$ is
less than or equal to the number of zeros of $r_c'=r'$, which is at most $n_p+n_q-1$.
\eop
\end{proof}

In order to control the behavior of singular functions we will also need the fact
that a small constant perturbation does not reduce the number of sense-preserving
zeros of $f$.

\begin{lem}\label{lem:N+N+}
Let $f$ be as in \eqref{eq:f} and let $z_1,\dots,z_k$ be the sense-preserving zeros of $f$.
Then $N_+(f) \le N_+(f_c)$ for all $c\in\C$ with sufficiently small $|c|$.
\end{lem}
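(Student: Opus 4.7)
The plan is to exploit the fact that a sense-preserving zero is a stable feature of $f$: the Jacobian $J_f(z)=|r'(z)|^2-1$ is strictly positive there, and $J_f=J_{f_c}$ since $f$ and $f_c$ differ only by the additive constant $c$. So locally around each sense-preserving zero, $f_c$ remains orientation-preserving and by a Rouché argument must have at least one zero, which is automatically sense-preserving.

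More concretely, I would first fix, for each sense-preserving zero $z_j$ of $f$, an open disk $U_j \defby B_{\varepsilon_j}(z_j)$ that (i) is contained in a neighborhood on which $|r'(z)|>1$ (available by continuity of $r'$ at $z_j$, since $|r'(z_j)|>1$), (ii) contains no pole of $r$, and (iii) contains $z_j$ as its only zero of $f$ (possible because $f$ has finitely many zeros). Shrinking the $\varepsilon_j$ if necessary, I may also assume the disks $\overline{U_1},\dots,\overline{U_k}$ are pairwise disjoint. Since $f$ is continuous and nonzero on the compact set $\bigcup_j \partial U_j$, the minimum $m\defby \min_{z\in \bigcup_j \partial U_j}|f(z)|$ is strictly positive.

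Next, for any $c\in\C$ with $|c|<m$ and any $z\in \partial U_j$, one has $|f(z)-f_c(z)|=|c|<m\le |f(z)|\le |f(z)|+|f_c(z)|$, and in particular $f_c$ is nonzero on $\partial U_j$. Hence by Theorem~\ref{thm:Rouche}, $V(f_c;\partial U_j)=V(f;\partial U_j)$. Now apply Theorem~\ref{thm:zero_counting} on $U_j$. Inside $U_j$ we have $|r'(z)|>1$, so there are neither singular nor sense-reversing zeros of $f$ or of $f_c$ there, and no poles either; hence
\begin{align*}
V(f;\partial U_j)=N_+(f;U_j)=1,\qquad V(f_c;\partial U_j)=N_+(f_c;U_j).
\end{align*}
Combining these with the Rouché equality gives $N_+(f_c;U_j)\ge 1$ for every $j$, and summing over the disjoint disks yields $N_+(f_c)\ge k=N_+(f)$.

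I do not foresee a serious obstacle: the two small points to verify are that the neighborhoods $U_j$ can simultaneously be chosen to avoid the (finitely many) other zeros and poles of $f$ while keeping $|r'|>1$, and that the hypothesis of Theorem~\ref{thm:zero_counting} is satisfied (no singular zeros of $f_c$ inside $U_j$), which is automatic from $J_{f_c}=J_f=|r'|^2-1>0$ on $U_j$. Once those observations are in place, the argument is essentially just Rouché plus the zero-counting formula.
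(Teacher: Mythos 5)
Your proposal is correct and follows essentially the same route as the paper: disjoint small disks around the sense-preserving zeros on which $|r'|>1$, a positive lower bound for $|f|$ on the boundaries, Rouch\'e to equate windings of $f$ and $f_c$, and the zero-counting theorem to turn those windings into counts of sense-preserving zeros. You are merely more explicit than the paper about why $f_c$ has only sense-preserving zeros in each disk (namely $J_{f_c}=J_f=|r'|^2-1>0$), which the paper compresses into the remark that a constant shift preserves orientation.
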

\begin{proof}
Since $f$ has finitely many zeros we can always find an $\eps>0$,
such that $f$ is sense-preserving on $B_\eps(z_j)$ for $j = 1, \dots, k$,
and $B_\eps(z_j) \cap B_\eps(z_\ell) = \emptyset$ for $j \neq \ell$, and
$$\delta := \min\big\{|f(z)| : z \in \cup_{j=1}^k\, \partial B_\eps(z_j)\big\}>0.$$
In particular, the condition $\delta>0$ just means that none of the boundaries
$\partial B_\eps(z_j)$ contains a zero of $f$. By construction, for all
$z \in \cup_{j=1}^k\,\partial B_\eps(z_j)$ we have for $|c| < \delta$
$$|f(z) - f_c(z)| = |c| < \delta \le |f(z)| \le  |f(z)| + |f_{c}(z)|.$$
With Theorem~\ref{thm:Rouche} we get, for each $j=1,\dots,k$,
$$ 1 = N_+(f;B_\eps(z_j)) = V(f;\partial B_\eps(z_j)) = V(f_c;\partial B_\eps(z_j)) = N_+(f_c;B_\eps(z_j)),$$
where in the third equality we used that a constant shift preserves the orientation of $f$ on $\C$. \eop
\end{proof}
\paragraph{Proof of Theorem~\ref{thm:main}:}
Let $f$ be as in \eqref{eq:f} and let $c\in\C$ be arbitrary. Due to Lemma~\ref{lem:density},
there exists a sequence $\{c_k\}_{k\in \N} \subset \C$, such that the functions
$f_{c_k}(z) := f(z) - c_k$ are regular, and $c_k \rightarrow c$. If $f_c$ is regular,
we can chose $c_k= c$ for all $k\in\N$.

If $n_p<n_q$, then for sufficiently small $|c_k-c|$,
\begin{align*}
N(f_c) &= N_+(f_c) + N_{0}(f_c) + N_{-}(f_c) \\
&\le N_{+}(f_{c_k}) - N_{-}(f_{c_k}) + N_{-}(f_{c_k}) + N_{0}(f_c) + N_{-}(f_c) \\
& \le (n_q - 1) + (n_p + n_q - 1) + (n_p + n_q - 1) \\
&= 2n_p + 3n_q - 3,
\end{align*}
where we have used Lemma~\ref{lem:N+-N-}--\ref{lem:N+N+}. Analogously, if $n_p > n_q+1$, then
\begin{align*}
N(f_c) &\le N_{+}(f_{c_k}) - N_{-}(f_{c_k}) + N_{-}(f_{c_k}) + N_{0}(f_c) + N_{-}(f_c) \\
& \le n_p + (n_p + n_q - 1) + (n_p + n_q - 1)  \\
& =  3n_p + 2n_q - 2.
\end{align*}
Finally, if $n_p=n_q$, then the rational function $r$ in \eqref{eq:f} can be written as
$$r(z)=\frac{\alpha z^n+\widetilde{p}(z)}{z^n+\widetilde{q}(z)}=
\frac{\widetilde{p}(z)-\alpha\widetilde{q}(z)}{z^n+\widetilde{q}(z)}+\alpha,$$
where $\alpha\neq 0$, ${\rm deg}(\widetilde{p})<n_p=n$ and ${\rm deg}(\widetilde{q})<n_q=n$.
Now the numerator degree is
$$\widetilde{n}_p:={\rm deg}(\widetilde{p}-\alpha\widetilde{q})\le n_p-1,$$
and applying the bound from the first case to the function
$$f_{-\alpha}(z):=
\frac{\widetilde{p}(z)-\alpha\widetilde{q}(z)}{z^n+\widetilde{q}(z)}-\conj{z}+\alpha$$
gives
$$N(f)=N(f_{-\alpha})\leq 2\widetilde{n}_p+3n_q-3\le 5n_p-5,$$
which completes the proof.

\bigskip

We will now show that any function $f$ as in \eqref{eq:f} that attains one of the bounds of
Theorem~\ref{thm:main} is regular, which generalizes~\cite[Theorem~3.1]{LuceSeteLiesen2014b}.

\begin{lem}\label{lem:N+}
Let $f$ be as in \eqref{eq:f} and suppose that $f$ is singular. Then there exists a constant $c\in\C$
such that $N_+(f) < N_+(f_{c})$.
\end{lem}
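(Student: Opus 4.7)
My plan is to locate, near a singular zero of $f$, a small perturbation $c$ producing an extra sense-preserving zero of $f_c$, while Lemma~\ref{lem:N+N+} guarantees that the original sense-preserving zeros are preserved. Fix a singular zero $z_0$ of $f$, so $f(z_0)=0$ and $|r'(z_0)|=1$; in particular $z_0$ is not a pole of $r$, so $r'$ is holomorphic in a neighborhood of $z_0$.

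The central observation I need is that every neighborhood of $z_0$ contains a point $z$ with $|r'(z)|>1$. I would prove this by contradiction via the maximum modulus principle: otherwise $|r'|$ would attain a local maximum at $z_0$, forcing the holomorphic function $r'$ to be constant in a neighborhood of $z_0$ and, being a rational function, constant on all of $\C$. But then $r(z)=\alpha z+\beta$ with $|\alpha|=1$, contradicting $n\geq 2$.

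Using this, I would fix $\eps>0$ small enough that $B_\eps(z_0)$ contains $z_0$ as its only zero of $f$ and is disjoint from the disks around the sense-preserving zeros $z_1,\dots,z_k$ of $f$ used in the proof of Lemma~\ref{lem:N+N+}. By the observation, there exists $\widetilde z\in B_\eps(z_0)\setminus\{z_0\}$ with $|r'(\widetilde z)|>1$, and since $f$ has only finitely many zeros I may also require $f(\widetilde z)\neq 0$. Setting $c:=f(\widetilde z)\neq 0$, we get $f_c(\widetilde z)=0$ and $J_{f_c}(\widetilde z)=|r'(\widetilde z)|^2-1>0$, so $\widetilde z$ is a sense-preserving zero of $f_c$ lying in $B_\eps(z_0)$.

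Finally, by continuity of $f$ together with $f(z_0)=0$, choosing $\widetilde z$ close enough to $z_0$ makes $|c|=|f(\widetilde z)|$ smaller than the threshold $\delta$ appearing in the proof of Lemma~\ref{lem:N+N+}. That lemma then produces at least $N_+(f)$ sense-preserving zeros of $f_c$ inside the disjoint disks around $z_1,\dots,z_k$, and $\widetilde z$ supplies one more sense-preserving zero of $f_c$ disjoint from all of them, yielding $N_+(f_c)\geq N_+(f)+1$. The only substantive point is the maximum modulus step ruling out $z_0$ as a local maximum of $|r'|$; the remaining work is a routine matter of arranging the various small quantities consistently by continuity.
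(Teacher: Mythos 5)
Your proof is correct and follows essentially the same route as the paper: pick a point $\widetilde z$ near the singular zero with $|r'(\widetilde z)|>1$, shift by $c=f(\widetilde z)$ to create a new sense-preserving zero there, and invoke Lemma~\ref{lem:N+N+} to keep the original sense-preserving zeros. Your maximum-modulus argument in fact explicitly justifies the existence of such a $\widetilde z$ in every neighborhood of the singular zero, a point the paper's proof takes for granted.
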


\begin{proof}
Note that $f$ has at least one sense-preserving zero due to Lemma \ref{lem:N+-N-}.
Let $z_1, \dots, z_k$ be the sense-preserving zeros of $f$ with the corresponding disks
$B_\eps(z_1),\dots,B_\eps(z_k)$ as well as $\delta>0$ as in the proof of Lemma~\ref{lem:N+N+}. 

Let $z_0$ be a singular zero of $f$.
We then  have $|f(z)| < \delta$ in $B_{\widetilde{\eps}}(z_0)$ and
$$B_{\widetilde{\eps}}(z_0)\cap B_\eps(z_j)=\emptyset,\quad j=1,\dots,k,$$
if $\widetilde{\eps} > 0$ is small enough. Let $\widetilde{z} \in B_{\widetilde{\eps}}(z_0)$
be arbitrary with $|r'(\widetilde{z})| > 1$. Then $|f(\widetilde{z})|<\delta$, and the
proof of Lemma~\ref{lem:N+N+} shows that the function $\widetilde{f} := f - f(\widetilde{z})$
has one sense-preserving zero in each of the $k$ disks $B_\eps(z_1),\dots,B_\eps(z_k)$.
Moreover, $\widetilde{f}$ has an additional sense-preserving zero at $\widetilde{z}$,
which means that $N_+(f) < N_+(\widetilde{f})$.\eop
\end{proof}

Note that the bounds of Lemma~\ref{lem:N+N+} and \ref{lem:N+} also hold for $N_-$.

\begin{thm} 
Let $f$ as in \eqref{eq:f} and $c\in\C$. If $f_c$ attains one of the bounds of Theorem~\ref{thm:main}, then $f_c$ is regular.
\end{thm}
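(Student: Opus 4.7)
The plan is to argue by contradiction. Suppose $f_c$ attains the bound $B(n_p,n_q)$ of Theorem~\ref{thm:main} but has a singular zero. I would first handle the two generic cases $n_p<n_q$ and $n_p>n_q+1$, and then reduce the borderline case $n_p=n_q$ to the first one by the same transformation used in the proof of Theorem~\ref{thm:main}.

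In the generic cases, since $f_c$ is singular, Lemma~\ref{lem:N+} applied to $f_c$ in place of $f$ produces $\tilde c\in\C$ with $N_+(f_c)<N_+(f_{c+\tilde c})$. Its proof shows that $\tilde c$ may be realized as $f_c(\tilde z)$ for any $\tilde z$ in the open set $U:=\{z\in B_{\tilde\varepsilon}(z_0):|r'(z)|>1\}$, where $z_0$ is a singular zero of $f_c$ and $\tilde\varepsilon>0$ is small. On $U$ the Jacobian $|r'|^2-1$ is strictly positive, so $f_c$ is a local diffeomorphism there; hence $f_c(U)$ is an open subset of $\C$ that accumulates at $f_c(z_0)=0$. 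Combining this with Lemma~\ref{lem:density}, which says the set of $c'\in\C$ making $f_{c'}$ regular is open and dense, I can pick $\tilde z\in U$ so that additionally $f_{c+\tilde c}$ is regular. Arranging this joint choice is the step I expect to be the main obstacle; the remainder is essentially arithmetic.

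With such a $\tilde c$ in hand, the chain of estimates from the proof of Theorem~\ref{thm:main} carries over, but with the strict inequality from Lemma~\ref{lem:N+} replacing the weak inequality from Lemma~\ref{lem:N+N+}. Applying Lemma~\ref{lem:N+-N-} to the regular $f_{c+\tilde c}$ (whose degrees yield the same $\mathrm{diff}\in\{n_q-1,\,n_p\}$ as those of $f$) and Lemma~\ref{lem:N0+N-} to both $f_{c+\tilde c}$ and $f_c$ gives
\begin{align*}
N(f_c)&=N_+(f_c)+N_0(f_c)+N_-(f_c)\\
&<N_+(f_{c+\tilde c})+N_0(f_c)+N_-(f_c)\\
&\le\mathrm{diff}+N_-(f_{c+\tilde c})+(n_p+n_q-1)\\
&\le\mathrm{diff}+2(n_p+n_q-1)=B(n_p,n_q),
\end{align*}
which contradicts the hypothesis $N(f_c)=B(n_p,n_q)$.

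For $n_p=n_q$, I would write $f_c=g_{c-\alpha}$ with $g(z)=\tilde r_0(z)-\conj z$ of numerator degree $\tilde n_p\le n_p-1<n_q$, exactly as in the proof of Theorem~\ref{thm:main}. If $f_c$ attains $5n_p-5$, then $2\tilde n_p+3n_q-3\le 5n_p-5$ forces $\tilde n_p=n_p-1$ and hence $g_{c-\alpha}$ to attain its own $\tilde n_p<n_q$ bound. The case already handled then shows that $g_{c-\alpha}=f_c$ is regular.
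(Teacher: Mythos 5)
Your proof is correct, and it reaches the contradiction by a somewhat different route than the paper. The paper first establishes, for an \emph{arbitrary} shift $c'$, the universal bound $N_+(f_{c'}) \le \mathrm{diff} + (n_p+n_q-1)$ (inequality \eqref{eq:tmp_1}, obtained by approximating $f_{c'}$ with regular shifts via Lemma~\ref{lem:density} and Lemma~\ref{lem:N+N+}); attaining the zero count then forces $N_+(f_c)$ to equal this universal maximum, so the strict increase supplied by Lemma~\ref{lem:N+} contradicts \eqref{eq:tmp_1} directly. Because that bound holds for every shift, the paper never needs the perturbed function to be regular. You instead feed the strict inequality of Lemma~\ref{lem:N+} into the counting chain itself, which obliges you to choose $\tilde c$ so that $f_{c+\tilde c}$ is simultaneously regular; your resolution --- $f_c$ is an open map on the sense-preserving set $U$ near the singular zero, so the open nonempty set $f_c(U)$ must meet the open dense set of regular parameters --- is valid. (You should note that $U\neq\emptyset$, which follows from the maximum principle applied to the nonconstant function $r'$ at the point $z_0$ with $|r'(z_0)|=1$; the paper's Lemma~\ref{lem:N+} is equally silent on this.) A genuine plus of your write-up is the explicit treatment of $n_p=n_q$ via the reduction $f_c=g_{c-\alpha}$ together with the observation that attaining $5n_p-5$ forces $\tilde n_p=n_p-1$, so that $g_{c-\alpha}$ attains its own bound; the paper's proof records only the cases $n_p\le n_q$ and $n_p>n_q+1$ and leaves this reduction implicit. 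The trade-off, then, is that the paper's organization avoids your joint-choice step entirely, while yours is more self-contained on the borderline case and makes the quantitative slack ($N(f_c)<B(n_p,n_q)$ for singular $f_c$) explicit.
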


\begin{proof}
Let $f$ be as in \eqref{eq:f} with $n_p \le n_q$, and let $c\in\C$ be arbitrary.
Due to Lemma~\ref{lem:density} we can choose a sequence $\{c_k\}_{k\in\N}$, such
that the functions $f_{c_k}$ are regular and $c_k \rightarrow c$. Using Lemma~\ref{lem:N+-N-}--\ref{lem:N+N+} we obtain
\begin{align}
\begin{aligned}\label{eq:tmp_1}
N_+(f_c) &\le N_+(f_{c_k}) = N_+(f_{c_k}) - N_-(f_{c_k}) + N_{-}(f_{c_k})\\
&\le (n_q - 1) + (n_p + n_q - 1) = n_p + 2n_q - 2.
\end{aligned}
\end{align}
Now suppose that $f_c$ attains the bound of Theorem~\ref{thm:main}. Then by
Lemma~\ref{lem:N0+N-} we get
\begin{align*}
2n_p+3n_q-3 &= N(f_c)=N_{+}(f_c)+N_{-}(f_c)+N_0(f_c) \\
&\le N_{+}(f_{c_k})+(n_p+n_q-1),
\end{align*}
and with  \eqref{eq:tmp_1} we obtain $N_{+}(f_c)=n_p+2n_p-2$. If $f_c$ would be
singular, we could choose a constant $\widetilde{c}\in\C$ such
that $N_{+}(f_{\widetilde{c}})>N_{+}(f_c)$, but
this is in contradiction to the upper bound \eqref{eq:tmp_1}, which holds
for an arbitrary constant.

The proof for the case $n_p > n_q + 1$ is analogous. \eop
\end{proof}

\section{Discussion of Theorem~\ref{thm:main}}

The reason why the special case $n_p=n_q+1$ is ``missing'' in Theorem~\ref{thm:main}
is that this case is not covered in Lemma~\ref{lem:N+-N-}. Note that in this case we can write
$r(z)=\alpha z+\widetilde{r}(z)$ for some $\alpha\neq 0$ and with 
$\lim_{|z|\rightarrow \infty} |\widetilde{r}(z)|$ finite (possibly zero). 
If $|\alpha|>1$, then for a sufficiently large $M\geq \widetilde{M}>0$ we obtain
(cf. the proof of Lemma~\ref{lem:N+-N-})
\begin{align*}
|f_c(z)-\alpha z| &=|\widetilde{r}(z)-\conj{z}-c|<|\alpha z| \leq 
|f_c(z)|+|\alpha z| 
\end{align*}
for all $z\in\partial B_{M}(0)$, leading to $1=n_p-n_q=N_+(f_c) - N_-(f_c)-n_q$, or 
$N_+(f_c) - N_-(f_c)=n_q+1=n_p$. This gives the bound $N(f_c)\leq 3n_p+2n_q-2=5n_p-4$, 
but we know that $N(f_c)\leq 5n_p-6$ from~\cite{LuceSeteLiesen2014a}. For the case
$|\alpha|=1$ the method of proof used for Lemma~\ref{lem:N+-N-} would give no result, 
and for $|\alpha|<1$ we would indeed obtain $N(f_c)\leq 5n_p-6$, since then
$$|f_c(z)+\conj{z}|=|\widetilde{r}(z)+\alpha z-c|<|z|\leq |f_c(z)|+|\conj{z}|$$
for all $z\in\partial B_{M}(0)$, giving $N_+(f_c) - N_-(f_c)=n_q-1=n_p-2$.

Apart from the special case $n_p=n_q+1$, Theorem~\ref{thm:main} covers all 
possible choices of $n_p$ and $n_p$ with $n=\max\{n_p,n_q\}\geq 2$, and all previous results 
in this area that we are aware of. In particular:
\begin{enumerate}
\item[(i)] For any choices of $n_p$ and $n_p$ in Theorem~\ref{thm:main} (except $n_p=n_p+1$) 
we get $N(f_c)\leq 5n-5$, which is the general bound from~\cite{KhavinsonNeumann2006}.
\item[(i)] For $n=n_p\geq 2$ and $n_q=0$, Theorem~\ref{thm:main} gives the bound for
harmonic polynomials from~\cite{KhavinsonSwiatek2003}.
\item[(iii)] For $n=n_p>n_q+1$, Theorem~\ref{thm:main} gives the same
bound as in~\cite{LeeMakarov2014}.
\item[(iv)] For $n=n_p=n_q+j$ with $j\geq 2$ we get $N(f)\le 5n-2(j+1)$. For $j=2$ this
is the same bound as in~\cite{LuceSeteLiesen2014b}, and for $j>2$ our new bound is
smaller than the bound in~\cite{LuceSeteLiesen2014b} ($5n-6$).
\item[(v)] For $n=n_q=n_p+j$ with $j\geq 1$ we get $N(f)\le 5n-(2j+3)$.
For $j=1$ this is the same bound as in~\cite{KhavinsonNeumann2006}, and for
$j>1$ our new bound is smaller than the previous one.
\end{enumerate}

The following upper bounds on the maximal number of zeros of $f$ as in \eqref{eq:f}
have been shown to be sharp:

\begin{align*}
N(f) \le \begin{cases}
3n-2, \quad &\text{ if } (n_p,n_q)=(n,0)~\cite{Geyer2008}, \\
5n-5,\quad &\text{ if } (n_p,n_q)=(n-1,n)~\cite{Rhie2003},\\
5n-6, \quad &\text{ if } (n_p,n_q)=(n,n-1)~\cite{LuceSeteLiesen2014b}.
\end{cases}
\end{align*}

Let $f$ be the \emph{Rhie function} from~\cite{Rhie2003} (see
also~\cite{SeteLuceLiesen2015b,SeteLuceLiesen2015a}), which has a rational function
$r$ of the type $(n-1,n)$, and which has $5n-5$ zeros. The results in~\cite{LiesenZur2017}
imply that for sufficiently small $|c| > 0$ the function $f_c$ has the same
number of zeros as $f$. The corresponding rational function $r_c$ then is of
the type $(n,n)$, so the bound $N(f)\le 5n-5$ is sharp also in the case $(n,n)$.
More generally, the sharpness of the bounds in Theorem~\ref{thm:main} 
for $n_p \ge n_q$ is discussed in~\cite[Theorem~C]{LeeMakarov2014}, while the case 
$n_p < n_q + 1$ remains a subject of future research.

\paragraph*{Acknowledgements}
We thank an anonymous referee for several helpful suggestions, and
in particular for pointing out the technical report~\cite{LeeMakarov2014}.

\bibliography{literature}
\bibliographystyle{siam}

\end{document}